\numberwithin{equation}{section}
\numberwithin{figure}{section}
\numberwithin{equation}{section}
\newtheorem{thm}{Theorem}[section]
\newtheorem{prop}[thm]{Proposition}
\newcommand{\pd}{\partial}
\newcommand{\DD}{\mathbb{D}}
\newcommand{\CC}{\mathbb{C}}
\newcommand{\BAC}{\lambda_{A^{2}}}
\newcommand{\dz}{\partial_{z}}
\begin{document}

\title[Bergman analytic content]{The Bergman analytic content of planar domains}

\author{Matthew Fleeman and Erik Lundberg}
\begin{abstract}
Given a planar domain $\Omega$, the \emph{Bergman analytic content}
measures the $L^{2}(\Omega)$-distance between $\bar{z}$ and the
Bergman space $A^{2}(\Omega)$. 
We compute the Bergman analytic content
of simply-connected quadrature domains with quadrature formula supported at one point,
and we also determine the function $f \in A^2(\Omega)$ that best approximates $\bar{z}$.
We show that,
for simply-connected domains, the square of Bergman analytic content
is equivalent to \emph{torsional rigidity} from classical elasticity
theory, while for multiply-connected domains these two domain constants
are not equivalent in general. 
\end{abstract}

\maketitle

\section{Introduction}

Recall that, for a bounded domain $\Omega$ in $\mathbb{C}$, the
Bergman space $A^{2}(\Omega)$ is the Hilbert space of functions holomorphic
in $\Omega$ that satisfy $\int_{\Omega}\vert f(z)\vert^{2}dA(z)<\infty.$
Extending previous studies on the approximation of $\bar{z}$ in analytic
function spaces, in \cite{GuadKhav} the authors introduced the notion
of \emph{Bergman analytic content}, defined as the $L^{2}(\Omega)$-distance
between $\bar{z}$ and the space $A^{2}(\Omega)$. They showed that
the best approximation to $\bar{z}$ is $0$ if and only if $\Omega$
is a disk, and that the best approximation is $\frac{c}{z}$ if and
only if $\Omega$ is an annulus centered at the origin.

D. Khavinson and the first author in \cite{FleemanKhavinson} reduced
the problem of finding the best approximation to that of solving a
Dirichlet problem in $\Omega$ with boundary data $\left|z\right|^{2}.$
Using this fact, we will determine the best approximation and Bergman
analytic content when $\Omega$ is a simply-connected, one-point quadrature
domain.  

Recall that a bounded domain $\Omega \subset \CC$ is called a \emph{quadrature domain} if it admits a formula expressing the area integral of any test function $g \in A^2(\Omega)$
as a finite sum of weighted point evaluations of $g$ and its derivatives:
\begin{equation}\label{eq:QF1}
 \int_{D} {g(z) dA(z)} = \sum_{m=1}^{N} \sum_{k=0}^{n_m}{a_{m,k}g^{(k)}(z_m)},
\end{equation}
where the points  $z_m \in \Omega$ and constants $a_{m,k}$ are each independent of $g$.
This class of domains is $C^{\infty}$-dense
in the space of domains having a $C^{\infty}$-smooth boundary \cite[Thm. 1.7]{Bell},
and the restricted class of quadrature domains for which $N=1$ in (\ref{eq:QF1}) has the same density property.
When $\Omega$ is a simply-connected quadrature domain with $N=1$,
the conformal mapping $\phi: \DD \rightarrow \Omega$ is a polynomial,
and by making a translation we may assume that the quadrature formula is supported at $\phi(0) = 0$.
\begin{thm}
\label{thm:compBAC} 
Let $\Omega\subset\CC$ be a simply-connected quadrature domain with quadrature formula supported at a single point (say the origin), 
and let $\phi:\DD\rightarrow\Omega$ be the (polynomial)
conformal map from the unit disk 
\[
\phi(z)=\sum_{k=1}^{n}a_{k}z^{k}.
\]
Then the Bergman analytic content $\lambda_{A^{2}}(\Omega)$ of $\Omega$
is given by: 
\begin{equation}
\lambda_{A^{2}}(\Omega)=\pi^{1/2}\left[\sum_{m=1}^{2n-1}\frac{\left|c_{m}\right|^{2}}{m+1}-\sum_{k=1}^{n-1}k\left|\sum_{j=1}^{n-k}a_{k+j}\overline{a_{j}}\right|^{2}\right]^{1/2},\label{eq:QDexplicit}
\end{equation}
where 
\[
c_{m}:=\sum_{k+j=m+1}ka_{k}a_{j}\qquad1\leq k,j\leq n.
\]
Moreover, the best approximation $f$ to $\bar{z}$ is the derivative
$f=F'$ of $F=P\circ\phi^{-1},$ where 
\[
P(\zeta)=\frac{1}{2}\sum_{k=1}^{n}\left|a_{k}\right|^{2}+\sum_{k=1}^{n-1}\sum_{j=1}^{n-k}a_{k+j}\overline{a_{j}}\zeta^{k}.
\]
\end{thm}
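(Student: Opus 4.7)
The plan is to combine the Fleeman--Khavinson reduction \cite{FleemanKhavinson} with an explicit solution of the resulting Dirichlet problem via the polynomial conformal map $\phi$. That reduction identifies the best $A^{2}(\Omega)$-approximation to $\bar z$ as $f = \partial_z u$, where $u$ is the harmonic extension into $\Omega$ of the boundary function $|z|^{2}$. Since $\Omega$ is simply-connected, we may write $u = F + \overline{F}$ for a holomorphic $F$ on $\Omega$ (unique up to an imaginary additive constant), and then $f = F'$. The proof therefore splits into two tasks: (i) identify $F$ explicitly, and (ii) evaluate the norms needed for $\lambda_{A^{2}}(\Omega) = \|\bar z - f\|_{L^{2}(\Omega)}$.

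For (i), I would transport the problem to the disk: set $\tilde F = F \circ \phi$, so that $\tilde F$ is holomorphic in $\DD$ and $\tilde F + \overline{\tilde F} = |\phi|^{2}$ on $\partial\DD$. Because $\phi$ is a polynomial, on $|\zeta| = 1$ one has
\[
|\phi(\zeta)|^{2} \;=\; \phi(\zeta)\,\overline{\phi(\zeta)} \;=\; \sum_{k,j=1}^{n} a_{k}\overline{a_{j}}\,\zeta^{k-j},
\]
a genuine Laurent polynomial on the unit circle. Matching non-negative Fourier coefficients with those of $\tilde F+\overline{\tilde F}$ identifies $\tilde F$ as exactly the polynomial $P$ stated in the theorem, whence $F = P\circ\phi^{-1}$ and $f = F'$. (A quick boundary check then confirms that $F + \overline{F}$ coincides with $|z|^{2}$ on $\partial\Omega$.)

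For (ii), since $f$ is the orthogonal projection of $\bar z$ onto $A^{2}(\Omega)$, the Pythagorean identity gives $\lambda_{A^{2}}(\Omega)^{2} = \|\bar z\|_{L^{2}(\Omega)}^{2} - \|f\|_{L^{2}(\Omega)}^{2}$. Changing variables by $z = \phi(\zeta)$ turns the first norm into $\int_{\DD} |\phi(\zeta)\phi'(\zeta)|^{2}\,dA(\zeta)$, which I would evaluate by expanding $\phi\phi' = \tfrac{1}{2}(\phi^{2})' = \sum_{m=1}^{2n-1} c_{m}\,\zeta^{m}$; here the symmetrization $\sum_{k+j=m+1}k\,a_{k}a_{j} = \tfrac{m+1}{2}\sum_{k+j=m+1}a_{k}a_{j}$ is exactly what matches the coefficients in the theorem's definition of $c_{m}$. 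The chain rule $F'(\phi(\zeta)) = P'(\zeta)/\phi'(\zeta)$ cancels the Jacobian $|\phi'|^{2}$ and reduces the second norm to $\int_{\DD} |P'(\zeta)|^{2}\,dA(\zeta)$. Orthogonality of the monomials $\zeta^{m}$ in $A^{2}(\DD)$ with $\|\zeta^{m}\|_{L^{2}(\DD)}^{2} = \pi/(m+1)$ then turns each integral into a finite sum, and subtracting produces (\ref{eq:QDexplicit}).

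The main obstacle is step (i), the identification of $F$. It uses in an essential way that $\phi$ is a polynomial: only then does $|\phi|^{2}$ restrict on $\partial\DD$ to a bona fide Laurent polynomial whose holomorphic part is itself a polynomial $P$ readable directly from the coefficients of $\phi$. For a general simply-connected domain the analogous $\tilde F$ would only be implicit (for instance as a Herglotz integral), so the one-point quadrature hypothesis is precisely what delivers the closed-form answer. Step (ii) is then a routine application of Parseval in $A^{2}(\DD)$.
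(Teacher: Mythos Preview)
Your proposal is correct and follows essentially the same route as the paper: the Fleeman--Khavinson reduction to the Dirichlet problem with data $|z|^{2}$, transport to $\DD$ via $\phi$, identification of $\tilde F=P$ by reading off the holomorphic part of the Laurent polynomial $|\phi(\zeta)|^{2}$ on $\mathbb{T}$, and then the Pythagorean splitting with both norms computed on $\DD$ via orthogonality of monomials. The only cosmetic difference is your normalization (boundary data $|z|^{2}$ with $u=F+\overline{F}$ versus the paper's $|z|^{2}/2$ with $u=\mathrm{Re}\,F$), which is immaterial.
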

In \cite{FleemanKhavinson}, the authors characterized domains for
which the best approximation to $\bar{z}$ is a polynomial, showing
in particular that the only quadrature domain with this property is
a disk. On the other hand, Theorem \ref{thm:compBAC} reveals that
for a large class of quadrature domains the best approximation $f$
has a primitive $F$ that becomes a polynomial in the right coordinate
system.

Let $\Omega$ be a domain in the plain, bounded by finitely many Jordan
curves $\Gamma_{0},\ldots,\Gamma_{n}$, with $\Gamma_{0}$ the outer
boundary curve. Then the \emph{torsional rigidity} of $\Omega$ equals
\begin{equation}
\rho(\Omega):=\int_{\Omega}\left|\nabla\nu\right|^{2}dA,\label{eq:TorsionalRigidity_Bandle}
\end{equation}
where $\nu$ solves the Dirichlet problem 
\[
\begin{cases}
\Delta\nu=-2 & \mathrm{in}\;\Omega\\
\nu|_{\Gamma_{0}}=0\\
\nu|_{\Gamma_{i}}=c_{i} & i=1,\ldots,n
\end{cases},
\]
where the constants $c_{i}$ are not known \emph{a priori} but are
determined from the conditions 
\[
\int_{\Gamma_{i}} \partial_n \nu ds =2a_{i},\qquad i=1,\ldots,n,
\]
where $\partial_n$ denotes differentiation in the direction of the outward normal,
$ds$ is the arclength element,
and $a_{i}$ is the area enclosed by $\Gamma_{i}$ 
(cf. \cite[pp. 63-66]{Bandle}).
The function $\nu$ is referred to as the ``stress function'' in elasticity theory,
and the torsional rigidity measures the resistance to twisting
of a cylindrical beam with cross section $\Omega$.

In \cite{FleemanKhavinson} the inequality 
\begin{equation}
\sqrt{\rho(\Omega)}\leq\lambda_{A^{2}}(\Omega).\label{eq:BACtorsion}
\end{equation}
was shown to hold for simply-connected domains. We show that this
is an equality for simply-connected domains. 
\begin{thm}
\label{thm:MainEquality} Suppose $\Omega$ is a bounded, simply-connected
domain. Then $\lambda_{A^{2}}(\Omega)^{2}=\rho(\Omega)$. 
\end{thm}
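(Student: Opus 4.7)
The plan is to exhibit the best approximation $f \in A^2(\Omega)$ to $\bar z$ explicitly in terms of the Dirichlet solution with boundary data $|z|^2$, and then to recognize the integrand of $\|\bar z - f\|_{L^2}^2$ as $|\nabla \nu|^2$, where $\nu$ is the stress function. The simply-connected hypothesis enters in two crucial ways: it guarantees that a harmonic conjugate exists globally, and it eliminates the interior flux conditions on $\nu$, so that $\nu$ is characterized just by $\Delta \nu = -2$ and $\nu|_{\partial \Omega}=0$.

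Concretely, I would let $u$ be the harmonic extension of $|z|^2|_{\partial\Omega}$ into $\Omega$. By simple-connectedness, $u = 2\operatorname{Re} F$ for some holomorphic $F$. Setting $\nu := \tfrac{1}{2}(u - |z|^2)$, a direct check gives $\Delta\nu=-2$ and $\nu|_{\partial\Omega}=0$, so $\nu$ is the stress function. Using $\partial_z u = F'$ and $\partial_z(z\bar z) = \bar z$, one computes
\[
|\nabla \nu|^2 = 4|\partial_z \nu|^2 = |F'(z) - \bar z|^2,
\]
so $\rho(\Omega) = \int_\Omega |F'(z) - \bar z|^2 \, dA$.

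It remains to identify $F'$ as the best approximation to $\bar z$ in $A^2(\Omega)$. This is already implicit in the Dirichlet-problem reduction of \cite{FleemanKhavinson}, but it can also be checked directly: for any $g \in A^2(\Omega)$, write $\bar z - F' = \partial_z(|z|^2 - u)$ and apply Stokes' theorem to $\int_\Omega (\bar z - F')\overline{g}\,dA$; the boundary contribution vanishes because $|z|^2 - u = 0$ on $\partial \Omega$, while the remaining interior term vanishes because $\partial_z \overline{g} = 0$. Hence $\bar z - F' \perp A^2(\Omega)$, so $F'$ is the orthogonal projection of $\bar z$ onto $A^2(\Omega)$, and $\lambda_{A^2}(\Omega)^2 = \|\bar z - F'\|_{L^2}^2 = \rho(\Omega)$. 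The main conceptual obstacle is not any single computation but the identification $|\nabla \nu|^2 = |F' - \bar z|^2$, which converts the quadratic form for torsional rigidity into the $L^2$-distance appearing in Bergman analytic content; both ingredients---the global antiderivative $F$ and the trivial flux conditions on $\nu$---are genuinely simply-connected phenomena, foreshadowing the failure of equality in the multiply-connected setting.
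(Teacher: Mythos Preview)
Your proof is correct and is essentially the same as the paper's: both define the harmonic function $u$ linking the stress function $\nu$ to $|z|^2$ (you normalize with boundary data $|z|^2$ and $u=2\operatorname{Re}F$, the paper with $|z|^2/2$ and $u=\operatorname{Re}F$, which is immaterial), then use $|\nabla\nu|^2 = 4|\partial_z\nu|^2 = |F'-\bar z|^2$ to identify the torsion integrand with the squared distance, invoking \cite{FleemanKhavinson} for $F'$ being the best approximation. Your added Stokes argument for the orthogonality $\bar z - F' \perp A^2(\Omega)$ is a pleasant self-contained alternative to that citation, but the overall route is the paper's.
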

Theorem \ref{thm:MainEquality} led us to notice that
some of the methods and examples in the current paper overlap with
classical studies of torsional rigidity \cite[Ch. 22]{Musk}, and
while the explicit formulas in Theorem \ref{thm:compBAC} appear to
be new, our proof based on conformal mapping is very similar to the
procedure described in \cite[Sec. 134]{Musk}.
The square of Bergman analytic content
is not in general equivalent to torsional rigidity. This follows from
explicit computations for doubly-connected domains such as the annulus
(see Section \ref{sec:examples}). 

As an extension of Theorem \ref{thm:compBAC},
it would be interesting to determine the best approximation to $\bar{z}$
and the Bergman analytic content for general quadrature domains.   
Since the problem again reduces to solving a Dirichlet problem with data $z\bar{z}$,
the procedure developed in \cite{Bell2}
for solving the Dirichlet problem with rational (in $z$ and $\bar{z}$) data
seems promising.

\smallskip

\noindent \textbf{Outline.} We prove Theorem \ref{thm:compBAC} in
Section \ref{sec:proof1} and Theorem \ref{thm:MainEquality} in Section
\ref{sec:proof2}. We discuss examples in Section \ref{sec:examples}.
In Section \ref{sec:poly}, we revisit the class of domains considered
in \cite{FleemanKhavinson} for which the best approximation to $\bar{z}$
is a monomial.

\smallskip

\noindent {\bf Acknowledgement.}
We wish to thank Dmitry Khavinson for helpful discussions and valuable feedback.
We would also like to acknowledge Jan-Fredrik Olsen who expressed Theorem \ref{thm:MainEquality} as a conjecture
during conversations at the conference ``Completeness problems, Carleson measures, and spaces of analytic functions'' at Mittag-Leffler.

\section{Proof of Theorem \ref{thm:compBAC}}

\label{sec:proof1}

By the definition of Bergman analytic content, we have $\lambda_{A^{2}}(\Omega)=\left\Vert \overline{z}-f\right\Vert _{2}$,
where $f$ is the projection of $\overline{z}$ onto $A^{2}(\Omega)$.
By the Pythagorean theorem we then have that 
\begin{align*}
\BAC(\Omega) & =\left(\int_{\Omega}\left|\overline{z}\right|^{2}dA(z)-\int_{\Omega}\left|f(z)\right|^{2}dA(z)\right)^{1/2}\\
 & =\left(\int_{\DD}\left|\overline{\phi}\phi'\right|^{2}dA-\int_{\DD}\left|f\circ\phi\right|^{2}\left|\phi'\right|^{2}dA\right)^{1/2},
\end{align*}
where we have changed variables $z=\phi(\zeta)$, $dA(z)=|\phi'(\zeta)|^{2}dA(\zeta)$.
The first term $\int_{\DD}\left|\overline{\phi}\phi'\right|^{2}dA=\int_{\DD}\left|\phi\phi'\right|^{2}dA$
is simply the square of the Bergman norm of a polynomial $\phi\phi'$:
\begin{equation}
\int_{\DD}\left|\overline{\phi}\phi'\right|^{2}dA=\pi\sum_{m=1}^{2n-1}\frac{\left|c_{m}\right|^{2}}{m+1},\label{eq:z-bar norm}
\end{equation}
where 
\[
c_{m}:=\sum_{k+j=m+1}ka_{k}a_{j}\qquad1\leq k,j\leq n,
\]
are the coefficients in the expansion of the product $\phi\cdot\phi'$.

In order to compute $\int_{\Omega}\left|f(z)\right|^{2}dA(z)$, we
first find $f$ explicitly. By \cite[Thm. 1]{FleemanKhavinson}, $f=F'$,
where $u=\mathrm{Re}(F)$ solves the Dirichlet problem 
\[
\begin{cases}
\Delta u & =0\\
u|_{\pd\Omega} & =\frac{\left|z\right|^{2}}{2}
\end{cases}.
\]
Changing coordinates using the conformal map $\phi$, we obtain a
harmonic function $\tilde{u}=u\circ\phi$ that solves the following
Dirichlet problem in the unit disk: 
\[
\begin{cases}
\Delta\tilde{u} & =0\\
\tilde{u}|_{\mathbb{T}} & =\frac{\phi\overline{\phi}}{2}
\end{cases}.
\]
Now, on $\mathbb{T}$ we have that $\phi\overline{\phi}=P(\zeta)+\overline{P(\zeta)}$,
where 
\[
P(\zeta)=\frac{1}{2}\sum_{k=1}^{n}\left|a_{k}\right|^{2}+\sum_{k=1}^{n-1}\sum_{j=1}^{n-k}a_{k+j}\overline{a_{j}}\zeta^{k}.
\]
Since this is a harmonic polynomial, we have that $\tilde{u}(\zeta)=\mathrm{Re}(P(\zeta))$.
Thus, $F\circ\phi=P$, and so by the chain rule $(f\circ\phi)(\phi')=p$,
where 
\[
p(\zeta)=P'(\zeta)=\sum_{k=1}^{n-1}k\sum_{j=1}^{n-k}a_{k+j}\overline{a_{j}}\zeta^{k-1}.
\]
Calculating the Bergman norm of this polynomial, we find that 
\begin{equation}
\int_{\Omega}\left|f(z)\right|^{2}dA(z)=\int_{\DD}\left|f\circ\phi\right|^{2}\left|\phi'\right|^{2}dA=\sum_{k=1}^{n-1}k\left|\sum_{j=1}^{n-k}a_{k+j}\overline{a_{j}}\right|^{2}.\label{eq:Projection norm}
\end{equation}
Combining \eqref{eq:z-bar norm} and \eqref{eq:Projection norm},
the result follows.

\section{Proof of Theorem \ref{thm:MainEquality}}

\label{sec:proof2}

Recall that if $\Omega$ is a simply connected domain, the torsional
rigidity $\rho=\rho(\Omega)$ is given by equation (\ref{eq:TorsionalRigidity_Bandle})
\begin{equation}
\rho=\int_{\Omega}|\nabla\nu|^{2}dA,\label{eq:torsion}
\end{equation}
where $\nu$ is the unique solution to the Dirichlet problem 
\[
\begin{cases}
\Delta\nu & =-2\\
\nu\vert_{\partial\Omega} & =\;0
\end{cases}.
\]

Consider the function $u(z):=\nu(z)+\frac{\left|z\right|^{2}}{2}$.
Then $u$ solves the Dirichlet problem stated in the proof of Theorem
\ref{thm:compBAC}: 
\[
\begin{cases}
\Delta u & =0\\
u|_{\pd\Omega} & =\frac{\left|z\right|^{2}}{2}
\end{cases}.
\]
Thus, $u=\mathrm{Re}(F)$, where $f=F'$ is the best approximation
to $\bar{z}$.

Letting $\nu$ denote the torsion function, we have: 
\begin{align*}
\rho(\Omega) & =\int_{\Omega}|\nabla\nu|^{2}dA\\
 & =\int_{\Omega}|2\dz\nu|^{2}dA\\
 & =\int_{\Omega}\left|2\dz u-2\dz\frac{|z|^{2}}{2}\right|^{2}dA\\
 & =\int_{\Omega}\left|F'-\bar{z}\right|^{2}dA\\
 & =\int_{\Omega}\left|\bar{z}-f\right|^{2}dA\\
 & =\int_{\Omega}|z|^{2}-|f|^{2}dA\\
 & =\lambda_{A^{2}}(\Omega)^{2},
\end{align*}
and this completes the proof.

\section{Examples}

\label{sec:examples}

\subsection{Epicycloids}

Let us consider the one-parameter family of domains $\Omega$ with
conformal map $\phi:\DD\rightarrow\Omega$, given by $\phi(z)=z+az^{n}$,
with $0\leq a\leq1/n$.

Applying Theorem \ref{thm:compBAC}, we immediately obtain: 
\[
\lambda_{A^{2}}(\Omega)=\sqrt{\frac{\pi\left(1+4a^{2}+na^{4}\right)}{2}}.
\]

When $a=1/n$ the domain develops cusps (the case $n=4$ is plotted
in Figure \ref{fig:epi}). The case $n=2$ and $a=1/2$ is a cardioid
(cf. \cite[Sec. 58]{Sokolnikoff}).

\begin{figure}[H]
\centering \includegraphics[scale=0.3]{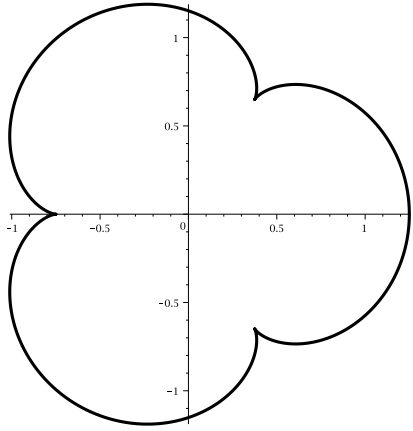}
\caption{The epicycloid domain when $n=4$, $a=1/4$.}
\label{fig:epi} 
\end{figure}

\subsection{The annulus}

The following example shows that Theorem \ref{thm:MainEquality} does
not hold in general for multiply-connected domains. Let $\Omega=\{z:\:r<\left|z\right|<R\}$
be the annulus. The best approximation to $\bar{z}$ in $A^{2}(\Omega)$
is $f(z)=\frac{C}{z}$, where 
\[
C=\frac{R^{2}-r^{2}}{2(\log R-\log r)}
\]
(cf. \cite{FleemanKhavinson} and \cite{GuadKhav}). Following the
proof in Section \ref{sec:proof1}, we have that 
\begin{equation}
\BAC(\Omega)^{2}=\int_{\Omega}|z|^{2}-|f|^{2}dA.\label{eq:BAC}
\end{equation}
Integrating in polar coordinates we get that 
\[
\int_{\Omega}|z|^{2}dA=\frac{\pi}{2}(R^{4}-R^{2}),
\]
and 
\begin{align*}
\int_{\Omega}\left|\frac{C}{z}\right|^{2}dA & =2\pi C^{2}\int_{r}^{R}\frac{1}{\rho}d\rho\\
 & =\frac{\pi}{2}\frac{(R^{2}-r^{2})^{2}}{\log R-\log r}.
\end{align*}
Thus, we have that 
\[
\BAC(\Omega)^{2}=\frac{\pi}{2}\left(\left(R^{4}-r^{4}\right)-\frac{(R^{2}-r^{2})^{2}}{\log R-\log r}\right),
\]
which is smaller than the torsional rigidity \cite[p. 64]{Bandle}
of $\Omega$: 
\[
\rho(\Omega)=\frac{\pi}{2}\left(R^{4}-r^{4}\right).
\]
So we find that neither Theorem \ref{thm:MainEquality} nor the inequality
(\ref{eq:BACtorsion}) hold for multiply-connected domains.

\subsection{The annular region bounded by a pair of confocal ellipses}

We consider the region $G$ between two confocal ellipses that is
the image of an annulus $\Omega:=\{z\in\CC:r<|z|<R\}$ under a Joukowski
map $\phi(z)=z+\frac{1}{z}$.

\begin{figure}[H]
\centering \includegraphics[scale=0.4]{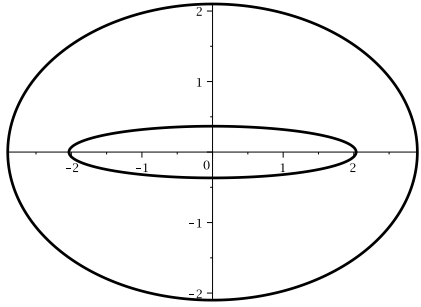}
\caption{The annular region $G$ when $r=1.2$, $R=2.5$.}
\label{fig:confocal} 
\end{figure}

Following the proof in Section \ref{sec:proof1}, the projection of
$\bar{z}$ to the Bergman space is given by $f=F'$, where $u=\mathrm{Re}(F)$
solves the Dirichlet problem 
\[
\begin{cases}
\Delta u & =0\\
u|_{\pd G} & =\frac{\left|z\right|^{2}}{2}
\end{cases}.
\]
The function $\tilde{u}=u\circ\phi$ is harmonic and solves the following
Dirichlet problem in the annulus $\Omega:=\{\zeta\in\CC:r<|\zeta|<R\}$:
\[
\begin{cases}
\Delta\tilde{u} & =0\\
\tilde{u}|_{\partial\Omega} & =\frac{\phi\overline{\phi}}{2}
\end{cases}.
\]
We make the ansatz 
\[
2\tilde{u}(\zeta)=A+B\log|\zeta|+C(\zeta^{2}+\bar{\zeta}^{2})+D\left(\frac{1}{\zeta^{2}}+\frac{1}{\bar{\zeta}^{2}}\right).
\]
The boundary condition gives: 
\[
2\tilde{u}(\zeta)=|\zeta|^{2}+\frac{1}{|\zeta|^{2}}+\frac{\zeta}{\bar{\zeta}}+\frac{\bar{\zeta}}{\zeta},\quad\text{ on \ensuremath{\partial\Omega}}.
\]
Using polar coordinates to parameterize the two circular boundary
components $z=re^{i\theta}$ and $z=Re^{i\theta}$, we obtain two equations: 
\begin{align*}
A+B\log r+2\left(Cr^{2}+\frac{D}{r^{2}}\right)\cos(2\theta) & =r^{2}+\frac{1}{r^{2}}+2\cos(2\theta),\\
A+B\log R+2\left(CR^{2}+\frac{D}{R^{2}}\right)\cos(2\theta) & =R^{2}+\frac{1}{R^{2}}+2\cos(2\theta),
\end{align*}
which implies the system of equations for $A,B,C,D$ 
\begin{align*}
Cr^{2}+\frac{D}{r^{2}} & =1,\\
CR^{2}+\frac{D}{R^{2}} & =1,\\
A+B\log R & =R^{2}+\frac{1}{R^{2}},\\
A+B\log r & =r^{2}+\frac{1}{r^{2}}.
\end{align*}
Solving this (linear in $A,B,C,D$) system, we obtain: 
\begin{align*}
A & =\frac{-\log r}{\log R-\log r}\left(R^{2}+\frac{1}{R^{2}}\right)+\frac{\log R}{\log R-\log r}\left(r^{2}+\frac{1}{r^{2}}\right),\\
B & =\frac{1}{\log R-\log r}\left(R^{2}+\frac{1}{R^{2}}-r^{2}-\frac{1}{r^{2}}\right),\\
C & =\frac{1}{R^{2}+r^{2}},\\
D & =\frac{r^{2}R^{2}}{R^{2}+r^{2}}.
\end{align*}
We have 
\[
(f\circ\phi)\phi'=\frac{B}{2\zeta}+C\zeta-\frac{D}{\zeta^{3}},
\]
and thus the square of the Bergman norm of $f$ is 
\begin{align*}
\int_{G}\left|f(z)\right|^{2}dA(z) & =\int_{\Omega}\left|f\circ\phi\right|^{2}\left|\phi'\right|^{2}dA\\
 & =\int_{\Omega}\left|\frac{B}{2\zeta}+C\zeta-\frac{D}{\zeta^{3}}\right|^{2}dA\\
 & =\frac{\pi}{2}\left(B^{2}(\log R-\log r)+C^{2}(R^{4}-r^{4})+D^{2}\left(\frac{1}{r^{4}}-\frac{1}{R^{4}}\right)\right).
\end{align*}
The square of the Bergman norm of $\bar{z}$ is 
\begin{align*}
\int_{G}\left|z\right|^{2}dA(z) & =\int_{\Omega}\left|\phi\phi'\right|^{2}dA\\
 & =\int_{\Omega}\left|\left(\zeta+\frac{1}{\zeta}\right)\left(1-\frac{1}{\zeta^{2}}\right)\right|^{2}dA(\zeta)\\
 & =\int_{\Omega}\left|\zeta-\frac{1}{\zeta^{3}}\right|^{2}dA(\zeta)\\
 & =\frac{\pi}{2}\left(R^{4}-r^{4}+\frac{1}{r^{4}}-\frac{1}{R^{4}}\right).
\end{align*}
Thus, $\lambda_{A^{2}}(G)^{2} = \int_{G}\left|z\right|^{2}dA(z) - \int_{G}\left|f(z)\right|^{2}dA(z)$ is given by: 
\[
 \frac{\pi}{2}\left(R^{4}-r^{4}+\frac{1}{r^{4}}-\frac{1}{R^{4}}-\frac{1}{\log R-\log r}\left(R^{2}+\frac{1}{R^{2}}-r^{2}-\frac{1}{r^{2}}\right)^{2} - 2\frac{R^{2}-r^{2}}{R^{2}+r^{2}}\right).
\]

\section{Domains for which the best approximation is a monomial}

\label{sec:poly}

The domains defined by $C\mathrm{Re}(z^{n})-\left|z\right|^{2}+1>0$
represent an interesting class of examples (cf. \cite{FleemanKhavinson}).  
These are the domains for which the best approximation to $\bar{z}$ is a monomial,
namely, $\frac{C n}{2} z^{n-1}$.
However, as indicated in Figure \ref{fig:broken},
there are values of $C$ for which the set $\{z:C\mathrm{Re}(z^{n})-\left|z\right|^{2}+1>0\}$
does not have a bounded component, and $\bar{z}$ is no longer in $L^2(\Omega)$. 
Here we address the question of what range of $C$
leads to a bounded component.

\begin{figure}[H]
\includegraphics[scale=0.3]{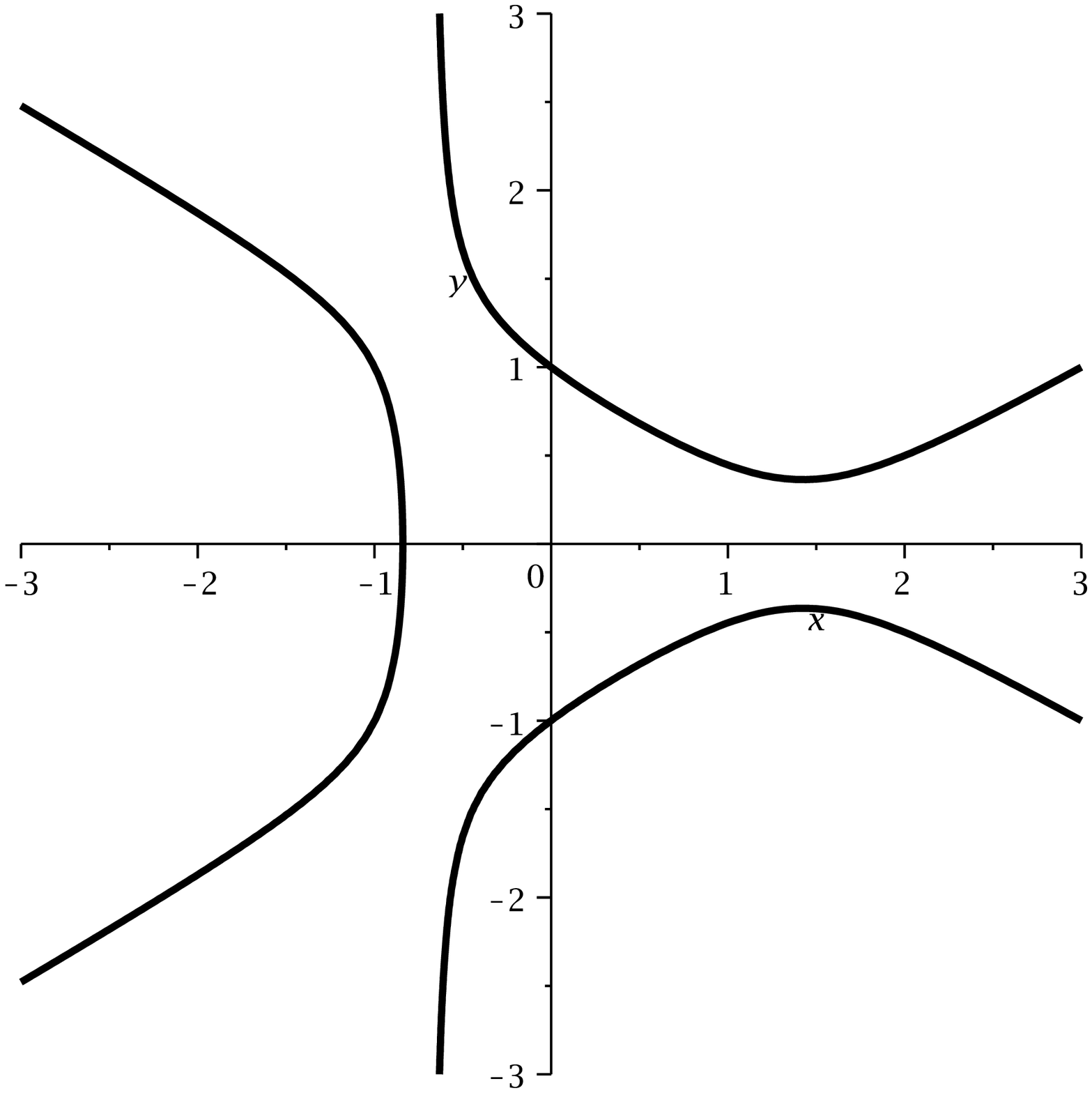}
\caption{The region $\{z:\frac{1}{2}\mathrm{Re}(z^{3})-\left|z\right|^{2}+1>0\}$
does not have a bounded component.}
\label{fig:broken} 
\end{figure}

\begin{prop}
The set $\{z:C\mathrm{Re}(z^{n})-\left|z\right|^{2}+1>0\}$ has
a bounded component whenever 
\[
C\leq\frac{2(n-2)^{\frac{n-2}{2}}}{n^{\frac{n}{2}}}.
\]
\end{prop}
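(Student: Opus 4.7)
The plan is to exhibit a circle $\{|z|=R\}$ on which the polynomial $g(z):=C\,\mathrm{Re}(z^n)-|z|^2+1$ is nonpositive; a simple connectedness argument will then confine the component of $\{g>0\}$ containing the origin to the open disk $\{|z|<R\}$. The natural choice $R:=\sqrt{n/(n-2)}$ is motivated by inspecting the critical points of $g$ in polar coordinates: writing $g(re^{i\theta})=Cr^n\cos(n\theta)-r^2+1$, the nonzero critical points occur at the peak angles $\theta=2k\pi/n$ and the radius $r_*=(2/(nC))^{1/(n-2)}$, with critical value $1-\tfrac{n-2}{n}r_*^2$. This critical value vanishes exactly when $r_*^2=n/(n-2)$, and solving for $C$ produces the stated threshold $C^*:=2(n-2)^{(n-2)/2}/n^{n/2}$.

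Assuming $C\geq 0$ (the sign can be flipped via the rotational symmetry $z\mapsto e^{i\pi/n}z$, which negates $\mathrm{Re}(z^n)$), the key step is a short algebraic calculation on the circle $\{|z|=R\}$. There the maximum of $g$ is $CR^n-R^2+1$, and substituting $R^n=(n/(n-2))^{n/2}$ together with $R^2=n/(n-2)$ yields the identity $C^*R^n=2/(n-2)=R^2-1$. Hence whenever $C\leq C^*$, one has $\max_{|z|=R}g\leq 0$, so the whole circle lies in $\{g\leq 0\}$ and is disjoint from $\{g>0\}$.

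Since $g(0)=1>0$, any arcwise connected subset of $\{g>0\}$ containing the origin must also be disjoint from $\{|z|=R\}$, and therefore contained in the open disk $\{|z|<R\}$; this is the desired bounded component. I do not foresee any serious obstacle here: the heart of the matter is the algebraic identity $C^*R^n=R^2-1$. A Morse-theoretic alternative would identify $g$ as having a unique local maximum at $0$ with value $1$ and $n$ saddle points with value $1-\tfrac{n-2}{n}r_*^2$ at the peak critical points, and observe that the component of $0$ in $\{g>0\}$ remains bounded precisely while this saddle value is nonpositive; but the direct circle argument above is the shortest route.
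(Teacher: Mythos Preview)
Your argument is correct and follows essentially the same separating-circle strategy as the paper: show that $g\le 0$ on a full circle enclosing the origin, so the component of $\{g>0\}$ containing $0$ is trapped inside. The only cosmetic difference is that the paper takes the $C$-dependent radius $R=(2/(nC))^{1/(n-2)}$ (the radial minimizer of $Cr^n-r^2+1$) and then derives the threshold, whereas you fix the threshold radius $R=\sqrt{n/(n-2)}$ once and for all and use monotonicity in $C$; the two choices coincide at $C=C^*$ and yield the same conclusion.
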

\begin{proof}
Take $z=re^{i\theta}$ and let $f(r,\theta):=C\cos(n\theta)r^{n}-r^{2}+1$
be the defining function of the domain in polar coordinates. We will
show that when 
\[
C\leq\frac{2(n-2)^{\frac{n-2}{2}}}{n^{\frac{n}{2}}}
\]
we have $f(R,\theta)\leq0$ for all $\theta$, where $R=(\frac{2}{nC})^{1/(n-2)}$.
Since the region $\{z:C\mathrm{Re}(z^{n})-\left|z\right|^{2}+1>0\}$
clearly contains the origin, this ensures that it has a component
entirely contained in the disk $|z|<R$.

It is enough to show that $f(R,0)\leq0$ since we have $f(R,\theta)\leq f(R,0)$.

The function $F(r):=f(r,0)=Cr^{n}-r^{2}+1$, has derivative $F'(r)=Cnr^{n-1}-2r$,
with a critical point at $R=(\frac{2}{nC})^{1/(n-2)}$, which by the
first derivative test is a local minimum. Plugging this critical point
into $F$, we find that 
\[
C\left(\frac{2}{nC}\right)^{n/(n-2)}-\left(\frac{2}{nC}\right)^{2/(n-2)}+1\leq0
\]
precisely when 
\[
C\leq\frac{2(n-2)^{\frac{n-2}{2}}}{n^{\frac{n}{2}}}.
\]
\end{proof}


\begin{thebibliography}{1}
\bibitem{Bandle}Bandle, C.,\emph{ Isoperimetric Inequalities and
Applications}, Pitman, London, 1980.

\bibitem{Bell} S.R. Bell, \emph{Quadrature domains and kernel function
zipping}, Ark. Mat. 43 (2005), 271-287.


\bibitem{Bell2} 
S.R. Bell, \emph{The Dirichlet and Neumann and Dirichlet-to-Neumann problems in quadrature, double quadrature, and non-quadrature domains}, 
Analysis and Mathematical Physics 5 (2015), 113-135.


\bibitem{Davis} P.J. Davis \emph{The Schwarz function and its applications},
The Mathematical Association of America, New York, The Carus Mathematical
Monographs, No. 17., 1974.

\bibitem{FleemanKhavinson} M. Fleeman, D. Khavinson, \emph{Approximating
$\overline{z}$ in the Bergman space,} Contemp. Math., to appear.
preprint available at: arXiv:1509.01370v1.

\bibitem{GuadKhav}Z. Guadarrama, D. Khavinson, \emph{Approximating
$\bar{z}$ in Hardy and Bergman norms}, Banach Spaces of Analytic
Function, Contemp. Math., 454, pp. 43-61, Amer. Math. Soc., Providence,
RI, 2008.

\bibitem{Musk} N.I. Muskhelishveli, \emph{Some Basic Problems of
the Mathematical Theory of Elasticity}, Springer Science+Business
Media Dordrecht, 1977.


\bibitem{Sokolnikoff} I.S. Sokolnikoff, \emph{Mathematical Theory
of Elasticity}, 2nd Ed., McGraw-Hill, New York, 1956.\end{thebibliography}
\end{document}